\title{ Cotype zeta function for subrings of $Z[t]/(t^3)$}
\begin{document}

%theorem environments
\newtheorem{thm}{Theorem}
\newtheorem{prop}[thm]{Proposition}
\newtheorem{conj}[thm]{Conjecture}
\newtheorem{lem}[thm]{Lemma}
\newtheorem{cor}[thm]{Corollary}
\newtheorem{axiom}[thm]{Axiom}
\newtheorem{sheep}[thm]{Corollary}
\newtheorem{deff}[thm]{Definition}
\newtheorem{fact}[thm]{Fact}
\newtheorem{example}[thm]{Example}
\newtheorem{slogan}[thm]{Slogan}
\newtheorem{remark}[thm]{Remark}
\newtheorem{quest}[thm]{Question}
\newtheorem{zample}[thm]{Example}

\newcommand{\sthat}{\hspace{.1cm}| \hspace{.1cm}}
%shortcuts for curve names
%\newcommand{\p}{\mathbb{P}^1}
%\newcommand{\ga}{\mathbb{G}_a}
%\newcommand{\gm}{\mathbb{G}_m}
%\newcommand{\spec}{ \mathit{Spec} }
\newcommand{\id}{\operatorname{id} }
\newcommand{\acl}{\operatorname{acl}}
\newcommand{\dcl}{\operatorname{dcl}}
\newcommand{\irr}{\operatorname{irr}}
\newcommand{\aut}{\operatorname{Aut}}
\newcommand{\fix}{\operatorname{Fix}}

\newcommand{\oo}{\mathcal{O}}
\newcommand{\aaa}{\mathcal{A}}
\newcommand{\mm}{\mathcal{M}}
\newcommand{\curg}{\mathcal{G}}
\newcommand{\bbf}{\mathbb{F}}
\newcommand{\A}{\mathbb{A}}
\newcommand{\R}{\mathbb{R}}
\newcommand{\Q}{\mathbb{Q}}
\newcommand{\C}{\mathbb{C}}
\newcommand{\cc}{\mathcal{C}}
\newcommand{\dd}{\mathcal{D}}
\newcommand{\N}{\mathbb{N}}
\newcommand{\Z}{\mathbb{Z}}
\newcommand{\cF}{\mathcal F}
\newcommand{\cB}{\mathcal B}
\newcommand{\cU}{\mathcal U}
\newcommand{\cV}{\mathcal V}
\newcommand{\cG}{\mathcal G}
\newcommand{\cD}{\mathcal D}
\newcommand{\curly}{\mathcal{C}}
\newcommand{\durly}{\mathcal{D}}
\newcommand{\fff}{\mathcal{F}}
\newcommand{\calc}{\mathcal{C}}
\newcommand{\GG}{\mathbb{G}}
\newcommand{\PP}{\mathbb{P}}
\newcommand{\Gal}{\mathrm{Gal}}
\newcommand{\Aut}{\mathrm{Aut}}
\newcommand{\signature}{\mathrm{sign}}

\definecolor{mypink3}{cmyk}{0, 0.7808, 0.4429, 0.1412}

\newcommand{\mahrad}[1]{{\color{blue} \sf $\clubsuit\clubsuit\clubsuit$ Mahrad: [#1]}}
\newcommand{\ramin}[1]{{\color{red}\sf $\clubsuit\clubsuit\clubsuit$ Ramin: [#1]}}
\newcommand{\ramintak}[1]{{\color{mypink3}\sf $\clubsuit\clubsuit\clubsuit$ Ramin2: [#1]}}
\newcommand{\ramintakloo}[1]{{\color{green} \sf $\clubsuit\clubsuit\clubsuit$ Ramin3 [#1]}}

\DeclareRobustCommand{\hlgreen}[1]{{\sethlcolor{green}\hl{#1}}}
\DeclareRobustCommand{\hlcyan}[1]{{\sethlcolor{cyan}\hl{#1}}}
\newcommand{\Fmodtor}{F^\times / \mu(F)}

\author{Sarthak Chimni} 
\address{Department of Mathematics, Statistics, and Computer Science, University of Illinois at Chicago, 851 S Morgan St (M/C 249), Chicago, IL 60607}
\email{sarthakchimni@gmail.com}
\author{Ramin Takloo-Bighash}
\email{rtakloo@uic.edu}

\maketitle

\section{Introduction}
The \textit{cotype} of a sublattice  $ \Lambda \subset \Z^n$ is defined as follows. 
By elementary divisor theory, there is a unique $n$ tuple of integers $(\alpha_1, \dots, \alpha_n) = (\alpha_1(\Lambda), \dots, \alpha_n(\Lambda))$ such that the finite abelian group $\Z^n/\Lambda$ is isomorphic to the sum of cyclic groups. 
\[
(\Z/\alpha_1\Z) \oplus (\Z/\alpha_2\Z) \oplus \cdots \oplus (\Z/\alpha_n\Z)
\]
where $\alpha_{i+1}|\alpha_i$ for $1 \leq i \leq n-1$. The $n$-tuple $(\alpha_1(\Lambda), \dots, \alpha_n(\Lambda))$ is called the \textit{cotype} of $\Lambda$. The largest index $i$ for which $\alpha_i \neq 1$ is called the \textit{corank} of $\Lambda$. A sublattice $\Lambda$ of  corank $0$ or $1$ is called {\em cocyclic}. If $\Lambda$ is cocyclic, then  $\Z^n/\Lambda$ is a cyclic group. In \cite{Brakenhoff} Brakenhoff shows that the number of cocyclic subrings of $\Z^n$ of index $p^e$ for any $e > 0$ is $ n \choose 2$.\\
Victor Petrogadsky  \cite{Petrogadsky} introduced the multiple zeta function  
\[
\zeta_{\Z^n}(s_1,s_2, \dots,s_n) = \sum_{|\Z^n:\Lambda| < \infty} {\alpha_1(\Lambda)}^{-s_1}\cdots {\alpha_n(\Lambda)}^{-s_n}
\]
to study the distribution of sublattices $\Lambda$ of $\Z^n$ of different  cotypes.
In \cite{CKK} the authors further study this function and find asymptotics for the distribution of coranks of sublattices of $\Z^n$. Gautam Chinta and Nathan Kaplan had suggested that one can use an analogous multiple zeta function to study the distribution of subrings of $R=\Z^n$ or $\Z[t]/(t^n)$ of different  corank given by
\begin{equation}
\zeta_{R}(s_1,s_2, \dots,s_n) = \sum_{\substack{|R:S| < \infty \\ S  \text{ is a subring } of R }} {\alpha_1(S)}^{-s_1}\cdots {\alpha_{n-1}(S)}^{-s_{n-1}}
\end{equation}
Observe that $\alpha_n$ is always 1 as $S$ is a subring. 

\

On the same lines as Lemma 1.1 of \cite{Petrogadsky} we have an Euler product factorization of $\zeta_{R}(s_1,s_2, \dots,s_{n-1})$ as follows.
\[
\zeta_{R}(s_1,s_2, \dots,s_n) = \prod_p \zeta_{R,p}(s_1,s_2,\cdots,s_{n-1}) 
\]
Where 
\[
\zeta_{R,p}(s_1,s_2,\cdots,s_n) = \sum_{\substack{|R:S| = p^k, k \geq 0 \\ S \text{ subring } of R}} \alpha_1(S)^{-s_1} \alpha_2(S)^{-s_2} \cdots \alpha_{n-1}^{-s_{n-1}}(S)
\]

\

In this note we adapt the methodology of \cite{KMT}, based on \cite{GSS}, to study the above multiple zeta function for the case where $R = \Z[t]/(t^3)$ and obtain explicit results.  We will show 

\begin{equation}
\zeta_{\Z[t]/(t^3),p}(s_1,s_2) = \frac{  1+ X  +pX^2 - pX^3Y -p^2X^4Y-p^2X^5Y  }{(1-XY)(1-pX^2Y)(1-p^2X^3)} 
\end{equation}

with $X = p^{-s_1}, Y = p^{-s_2}$.  This result allows us to isolate the contribution of cocyclic subrings to the total subring zeta function. 
Namely, let $a_n^{cyclic}(\Z[t]/(t^3))$ be the number of subrings of $\Z[t]/(t^3)$ which are cocyclic and of index $n$ in $\Z[t]/(t^3)$. Also let 
$$
\zeta_{\Z[t]/(t^3),p}^{\text{cyclic}}(s) = \sum_{n=1}^\infty \frac{a_n^{cyclic}(\Z[t]/(t^3))}{n^s}
$$
Then it follows that $\zeta_{\Z[t]/(t^3)}^{\text{cyclic}}(s) $ is equal to 
\begin{equation}
\zeta(s)\zeta(2s-1)\zeta(3s-2) \prod_p(
1  - p^{-2s} - p^{1-3s} + p^{1-4s}- p^{2-4s} + p^{2-5s}). 
\end{equation}
Applying a Tauberian theorem gives 
\[
\sum_{n \leq B} a_n^{cyclic}(\Z[t]/(t^3)) \sim  CB(\ln{B})^2, \;  \;   B \to \infty, 
\]
with $C =  \frac{1}{12} \prod_p (1 -3p^{-2} +2p^{-3})$. For comparison, as we will see, the total number of subrings of $\Z[t]/(t^3)$ of index bounded by $B$ grows like 
\[ \frac{1}{12\zeta(2)} B (\ln{B})^2
\]
which is of the same order of magnitude as the number of cocyclic subrings of index bounded by $B$.

\

The second author wishes to thank the Simons Foundation for partial support of his work through a Collaboration Grant. The authors also wish to thank Gautam Chinta and Nathan Kaplan for helpful conversations. This note was inspired by unpublished computation of Chinta where he had treated the case of $\Z^3$. 

\

The paper is organized as follows: In \S \ref{padic} we introduce $p$-adic integration techniques that we will use to compute the multiple subring zeta function for $\Z[t]/(t^3)$. We carry out the main $p$-adic computation in \S \ref{thecase}. The theorems are stated in \S \ref{theorems}.

\section{The $p$-adic integral}\label{padic}
Let $R = \Z[t]/(t^n)$. Note that the additive structure of $R$ is the same as that of $\Z^n$ and that $\beta = \{1,t,\dots,t^{n-1}\}$ is a basis for $R$ as a lattice. We represent elements of $R$ as row vectors of length $n$ via the map that takes $x_j \to e_{n-j}$, where $e_k$ denotes the $k$-th standard basis vector, and is extended by linearity. With this identification, the multiplicative identity 1 is represented by the row vector $(0,0,\dots,1)$. We consider the multiplication of two row vectors $u$, $v$, denoted $u\circ v$, as the row vector representing the product of the corresponding polynomials.

\

The paper \cite{GSS} introduced a $p$-adic formalism to study the local Euler factors $\zeta_{R,p}(s)$. Fix a $\Z$-basis for $R$ and identify $R$ with $\Z^n$ as above. The multiplication in $R$ is given by a bi-additive map
\[
\circ : \Z^n \times \Z^n \to \Z^n
\]
which extends to a bi-additive map
\[
\circ_p : \Z_p^n \times \Z_p^n \to \Z_p^n
\]
giving $R_p = R \otimes_{\Z} \Z_p$ the structure of a $\Z_p$ -algebra. 

\

Let $\mathcal{M}_p(\beta)$ be the subset of the set of $n \times n$ lower triangular matrices $M$ with entries in $\Z_p$ such that if the rows of $M = (x_{ij})$ are denoted by $v_1, \dots v_n$, then for all $i,j$ satisfying $1 \leq i,j \leq n$, there are $p$-adic integers $c_{ij}^1, \dots c_{ij}^n$ such that 
\begin{equation} \label{multiplicativity}
v_i \circ v_j = \sum_{k=1}^n c_{ij}^kv_k 
\end{equation}

Let $dM$ be the normalized additive Haar measure on $T_n(\Z_p)$, the set of $ n \times n$ lower triangular matrices with entries in $\Z_p$. Then proposition 3.1 of \cite{GSS} can be adapted to write $\zeta_{R,p}(s_1, \dots, s_n)$ as a $p$-adic integral.

\begin{multline} \label{p-adic integral}
\zeta_{R,p}(s_1, \dots, s_n) = (1-p^{-1})^{1-n} \int_{\mathcal{M}_p(\beta)} |x_{11}|^{s_1-n+1} |x_{22}|^{s_1-n+2}\cdots \\ |x_{n-1,n-1}|^{s_1-1}|g_{n-1}|^{s_2-s_1}   | g_{n-2}|^{s_3-s_2}\cdots  |g_1|^{s_{n-1} -s_{n-2}} dM
\end{multline}
where $g_k $ is the gcd of the $k \times k$ minors of $M$. The above equation follows from the following proposition.

\begin{prop} \label{cocyclic}
If $S$ is a  sublattice of $\Z^n$ of full rank generated by the rows of a matrix $M$ then we can write $R/S \cong (\Z/\alpha_1\Z) \oplus (\Z/\alpha_2\Z) \oplus \cdots \oplus(\Z/\alpha_n\Z)$ where $\alpha_{i+1} \mid \alpha_i$ and $\alpha_{n-k+1}\alpha_{n-k+2} \cdots \alpha_n$ is equal to the gcd of all $k \times k$ minors of $M$, with the convention that if all $k \times k$ minors are $0$, then their gcd is $0$.
\end{prop}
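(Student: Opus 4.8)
The plan is to reduce the statement to the Smith Normal Form of the matrix $M$. Since $S$ is a full-rank sublattice of $\Z^n$, the matrix $M$ is square with nonzero determinant (or, if we allow degeneracy, the general statement still makes sense with the stated convention about zero minors). By the theory of elementary divisors over the principal ideal domain $\Z$, there exist matrices $U, V \in GL_n(\Z)$ such that $UMV = D$ is diagonal, say $D = \operatorname{diag}(\delta_1, \delta_2, \dots, \delta_n)$ with $\delta_1 \mid \delta_2 \mid \cdots \mid \delta_n$. Because $U$ and $V$ are invertible over $\Z$, left-multiplication by $U$ (row operations) does not change the lattice generated by the rows, and right-multiplication by $V$ (change of basis of $\Z^n$) induces an isomorphism of the quotient; hence $\Z^n/S \cong \Z^n/D\Z^n \cong \bigoplus_{i=1}^n \Z/\delta_i\Z$. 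Reindexing so that $\alpha_i = \delta_{n+1-i}$ gives the divisibility $\alpha_{i+1}\mid\alpha_i$ in the order stated in the proposition.

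The second step is to identify the product $\alpha_{n-k+1}\cdots\alpha_n = \delta_1\delta_2\cdots\delta_k$ with the gcd of the $k\times k$ minors of $M$. First I would observe that the gcd of the $k\times k$ minors is invariant under multiplication by elements of $GL_n(\Z)$ on either side: this is the standard Cauchy–Binet argument, since each $k\times k$ minor of $UMV$ is a $\Z$-linear combination of $k\times k$ minors of $M$ (so the gcd can only grow or stay the same), and applying the same to $U^{-1}(UMV)V^{-1} = M$ shows the two gcds divide each other, hence are equal (up to sign/units, i.e. equal as nonnegative integers). Therefore it suffices to compute the gcd of the $k\times k$ minors of the diagonal matrix $D$. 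The nonzero $k\times k$ minors of $D$ are exactly the products $\delta_{i_1}\delta_{i_2}\cdots\delta_{i_k}$ over $k$-element subsets $\{i_1<\cdots<i_k\}$ of $\{1,\dots,n\}$, and since $\delta_1\mid\delta_2\mid\cdots\mid\delta_n$, every such product is a multiple of $\delta_1\delta_2\cdots\delta_k$, which itself occurs as a minor; hence the gcd is $\delta_1\cdots\delta_k$, as desired. The convention about all $k\times k$ minors vanishing corresponds exactly to $\delta_1\cdots\delta_k = 0$, i.e. to $\operatorname{rank}(M) < k$, so the statement remains correct verbatim in that case.

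I do not expect a serious obstacle here: this proposition is essentially a restatement of the Smith Normal Form together with the classical characterization of elementary divisors via determinantal divisors (the $d_k/d_{k-1}$ formula, where $d_k$ is the gcd of the $k\times k$ minors). The only points requiring a little care are the bookkeeping of the two indexing conventions — the proposition orders the invariant factors with $\alpha_1$ largest (matching the cotype convention from the introduction), whereas Smith form is usually written with $\delta_1$ smallest — and making sure the invariance-of-gcd argument is spelled out on both sides (rows and columns) since $M$ need not be symmetric. Neither of these is deep; the bulk of the write-up is just invoking the structure theorem and the Cauchy–Binet identity and then doing the elementary gcd computation on a diagonal matrix.
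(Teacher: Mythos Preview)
Your argument is correct and is exactly the standard proof via Smith Normal Form together with the invariance of determinantal divisors under $GL_n(\Z)$-equivalence. Note that the paper itself does not supply a proof of this proposition at all; it is stated as a known fact (essentially the classical description of elementary divisors in terms of the gcds of $k\times k$ minors) and used to justify the integrand in the $p$-adic integral formula. So there is nothing to compare against, and your write-up would serve perfectly well as the omitted proof.
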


\section{The case where $R = \Z[t]/(t^3)$ }\label{thecase}
Let $ R = \Z[t]/(t^3)$. Then the corresponding multiple subring zeta function is given by
\[
\zeta_{\Z[t]/(t^3)}(s_1,s_2) = \sum_{\substack{|R/S| < \infty \\ S \text{ subring } of R}} \alpha_1(S)^{-s_1} \alpha_2(S)^{-s_2}
\]
where $(\alpha_1(S), \alpha_2(S))$ is the cotype of the subring $S$. It follows from lemma 1.1 in \cite{Petrogadsky} that there is an Euler Product for $\zeta_{\Z[t]/(t^3)}(s_1,s_2)$ given by
\[ 
\zeta_{\Z[t]/(t^3)}(s_1,s_2) = \prod_p \zeta_{\Z[t]/(t^3),p} (s_1,s_2) 
\]
where
\[
\zeta_{\Z[t]/(t^3),p}(s_1,s_2) = \sum_{\substack{|R:S| = p^k, k \geq 0 \\ S \text{ subring of }  \Z[t]/(t^3)}} \alpha_1(S)^{-s_1} \alpha_2(S)^{-s_2}
\]
The goal of this section is to prove the following theorem: 

\begin{thm}\label{main-thm}
We have 
\begin{equation}
\zeta_{\Z[t]/(t^3),p}(s_1,s_2) = \frac{  1+ X  +pX^2 - pX^3Y -p^2X^4Y-p^2X^5Y  }{(1-XY)(1-pX^2Y)(1-p^2X^3)} 
\end{equation}
with $X = p^{-s_1}, Y = p^{-s_2}$. 
\end{thm}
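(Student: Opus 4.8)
The plan is to evaluate the $p$-adic integral \eqref{p-adic integral} directly for $n=3$ by parametrizing $\mathcal{M}_p(\beta)$ and computing the resulting sum. First I would work out the multiplicativity constraint \eqref{multiplicativity} explicitly in the basis $\beta = \{1, t, t^2\}$ of $R = \Z[t]/(t^3)$: writing the rows of the lower-triangular matrix $M$ as $v_1 = (0,0,x_{11})$ (up to the identification that $v_1$ should represent a multiple of $1$), $v_2 = (0, x_{22}, x_{21})$, $v_3 = (x_{33}, x_{32}, x_{31})$, the products $v_i \circ v_j$ are computed via polynomial multiplication mod $t^3$, and requiring each product to lie in the $\Z_p$-span of $v_1, v_2, v_3$ yields divisibility conditions among the $x_{ij}$. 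The key relations will be that $x_{11} \mid x_{22}^2$, that $x_{22} \mid x_{11} x_{21}$-type terms vanish appropriately, and similar conditions coming from $v_2 \circ v_3$ and $v_3 \circ v_3$; I expect the upshot is a finite list of ``arithmetic types'' for the valuations $(a,b,c) = (v(x_{11}), v(x_{22}), v(x_{33}))$ together with constraints relating the off-diagonal valuations.

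Next I would translate the gcd-of-minors factors. For $n=3$, $g_1 = \gcd$ of all entries, $g_2 = \gcd$ of all $2\times 2$ minors, and $g_3 = \det M = x_{11}x_{22}x_{33}$; by Proposition \ref{cocyclic}, $g_2 = \alpha_2 \alpha_3 = \alpha_2$ (since $\alpha_3 = 1$ for subrings) and $g_1 = \alpha_1\alpha_2\alpha_3 = \alpha_1\alpha_2$, so $|g_2|^{s_2 - s_1}|g_1|^{s_{?}}$ reshuffles into $\alpha_1^{-s_1}\alpha_2^{-s_2}$ as in the definition. Concretely the integrand becomes $|x_{11}|^{s_1 - 2}|x_{22}|^{s_1 - 1}|g_2|^{s_2 - s_1}$ and the normalizing constant is $(1-p^{-1})^{-2}$. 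I would then break the integral over $\mathcal{M}_p(\beta)$ into pieces indexed by the valuation vector of the diagonal and by the ``shape'' of the off-diagonal entries modulo the relevant powers of $p$, counting in each piece how many residue choices for $x_{21}, x_{31}, x_{32}$ satisfy the subring condition and computing $v(g_2)$ on that piece. Each piece contributes a geometric-type series in $X = p^{-s_1}$ and $Y = p^{-s_2}$, and summing them gives a rational function.

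The main obstacle will be the bookkeeping for $g_2$, the gcd of the $2\times 2$ minors, since its valuation depends not just on the diagonal valuations but in a somewhat delicate way on the off-diagonal entries and their interaction with the subring constraints; getting the count of off-diagonal residues compatible with each prescribed value of $v(g_2)$ right is where errors are most likely. A secondary subtlety is making sure the lower-triangular parametrization correctly accounts for the ordering convention (the identity being $(0,0,1)$ and the map $t^j \mapsto e_{n-j}$), so that $v_1$ really is forced to be a unit multiple of $1$ — in fact I expect $v(x_{11}) = 0$ always, which simplifies things. After assembling the sum I would verify the answer by two independent checks: specializing $Y = 1$ (i.e. $s_2 = 0$) should recover the known one-variable subring zeta function $\zeta_{\Z[t]/(t^3), p}(s)$ of $GSS$-type, namely $\zeta_p(s)\zeta_p(2s-1)\zeta_p(3s-2)$ up to the expected polynomial correction, and setting $p$-adic absolute values to track only cocyclic subrings ($v(\alpha_2) \le 1$) should match Brakenhoff's count of $\binom{3}{2} = 3$ cocyclic subrings of each index $p^e$; matching the claimed numerator $1 + X + pX^2 - pX^3Y - p^2X^4Y - p^2X^5Y$ and denominator $(1-XY)(1-pX^2Y)(1-p^2X^3)$ then completes the proof.
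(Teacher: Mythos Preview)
Your plan---evaluate the $p$-adic integral \eqref{p-adic integral} by parametrizing $\mathcal{M}_p(\beta)$---is exactly what the paper does, but you are over-anticipating the complexity. Once the row containing the identity is normalized to $(0,0,1)$, the matrix has only \emph{one} free off-diagonal entry: $M$ is determined by three parameters $x=x_{11}$, $z=x_{22}$, $y=x_{21}$ (with $x_{33}=1$ and $x_{31}=x_{32}=0$), so there are no $x_{31},x_{32}$ to track. The only nontrivial product is $v_2\circ v_2=(yt^2+zt)^2=z^2t^2$, giving the single constraint $x\mid z^2$, i.e.\ $|z|^2\le|x|$; products with $v_3=1$ are tautological, and $v_1\in t^2\Z_p$ annihilates everything mod $t^3$. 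Moreover $g_1=\gcd(\text{entries})=1$ and the nonzero $2\times 2$ minors are $x,y,z,xz$, so $g_2=\gcd(x,y,z)$ and $|g_2|=\max(|x|,|y|,|z|)$: the ``delicate'' bookkeeping you worry about collapses completely. The paper then evaluates the resulting three-variable integral by splitting into six cases according to the ordering of $|x|,|y|,|z|$; your residue-counting variant would also work but is not a different method in substance.

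Two of your proposed sanity checks are wrong and would mislead you. To recover the one-variable subring zeta function you must set $s_1=s_2=s$ (i.e.\ $X=Y$), since $[R:S]=\alpha_1\alpha_2$; specializing $Y=1$ (i.e.\ $s_2=0$) computes $\sum_S\alpha_1(S)^{-s_1}$, which is a different object. And Brakenhoff's $\binom{n}{2}$ count of cocyclic subrings of index $p^e$ is for the ring $\Z^n$, not for $\Z[t]/(t^n)$; the correct cocyclic specialization here is $Y\to 0$ (i.e.\ $s_2\to\infty$), and the resulting coefficients are not constant in $e$.
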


We start with a lemma. 

\begin{lem}
For $\Z[t]/(t^3), \mathcal{M}_p(\beta)$ is the subset of $ 3 \times 3 $ lower triangular matrices M with entries in $\Z_p$ 
\[
M = 
\begin{bmatrix}
x   &   0    &   0 \\
y   &    z   &   0 \\ 
0    &   0   &   1
\end{bmatrix}
\]
where $|z|^2 \leq |x|$.
\end{lem}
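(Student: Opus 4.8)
The plan is to unwind what membership in $\mathcal{M}_p(\beta)$ means when $n=3$ and then read off the constraint from the single nontrivial product in $\Z_p[t]/(t^3)$. First I would pin down the bottom row. Following \cite{GSS}, $\mathcal{M}_p(\beta)$ is designed so that its elements encode the finite-index subrings $S=S_M$ of $R_p$, all of which contain the identity $1=(0,0,1)$, and consistently the last diagonal entry carries no exponent in the $p$-adic integral of \S\ref{padic}; so the bottom row should be normalized to $1$ itself. Concretely: writing $(t)\subset R_p$ for the ideal consisting of the row vectors $(a,b,0)$, the inclusion $1\in S$ gives the $\Z_p$-module decomposition $S=(S\cap(t))\oplus\Z_p\cdot 1$, because $s-(\text{third coordinate of }s)\cdot 1\in S\cap(t)$ for every $s\in S$; picking a lower-triangular $\Z_p$-basis $v_1=(x,0,0)$, $v_2=(y,z,0)$ of the rank-two lattice $S\cap(t)\subset\Z_p e_1+\Z_p e_2$ and adjoining $v_3=(0,0,1)$ produces a generating matrix of $S$ of exactly the stated shape. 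It then remains to decide for which $x,y,z\in\Z_p$ the multiplicativity condition \eqref{multiplicativity} holds.

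Next I would compute the products $v_i\circ v_j$. Under the identification $v_1\leftrightarrow xt^2$, $v_2\leftrightarrow zt+yt^2$, $v_3\leftrightarrow 1$, and using $t^3=0$ together with commutativity of $\circ_p$, one finds $v_3\circ v_j=v_j$ for all $j$, $v_1\circ v_1=x^2t^4=0$, and $v_1\circ v_2=v_2\circ v_1=xzt^3+xyt^4=0$; the only product that is neither zero nor already a row of $M$ is
\[
v_2\circ v_2=(zt+yt^2)^2=z^2t^2+2yzt^3+y^2t^4=z^2t^2,
\]
that is, the row vector $(z^2,0,0)$. Hence the multiplicativity condition \eqref{multiplicativity} for $M$ reduces to the single requirement $(z^2,0,0)\in\Z_p v_1+\Z_p v_2+\Z_p v_3$.

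Finally I would solve this membership. Writing $(z^2,0,0)=\lambda_1(x,0,0)+\lambda_2(y,z,0)+\lambda_3(0,0,1)$ with $\lambda_i\in\Z_p$ forces $\lambda_3=0$ and $\lambda_2 z=0$; off the null set $\{z=0\}$ this yields $\lambda_2=0$ and hence $z^2=\lambda_1 x$, i.e.\ $x\mid z^2$ in $\Z_p$, which is precisely $|z|^2=|z^2|\le|x|$. Conversely, $x\mid z^2$ exhibits $(z^2,0,0)$ as a $\Z_p$-multiple of $v_1$ (and when $z=0$ the inequality is automatic and $v_2\circ v_2=0$), and once every $v_i\circ v_j$ lies in $S=\sum_k\Z_p v_k$, bi-additivity of $\circ_p$ forces $S\circ S\subseteq S$ while $1=v_3\in S$, so $M\in\mathcal{M}_p(\beta)$; this gives the asserted description. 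I do not expect a genuine obstacle here, since the computation is short; the step deserving the most care is the normalization of the bottom row to $(0,0,1)$, as that is where the convention that subrings contain $1$ and the $p$-adic normal form underlying \S\ref{padic} have to be reconciled with the exponents appearing there.
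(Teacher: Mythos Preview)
Your proposal is correct and follows essentially the same approach as the paper: the key step in both is computing $v_2\circ v_2=(zt+yt^2)^2=z^2t^2$ and reading off the divisibility condition $x\mid z^2$, i.e.\ $|z|^2\le|x|$. You are simply more thorough than the paper's one-line argument, additionally justifying the normalization of the bottom row to $(0,0,1)$, checking that the remaining products $v_i\circ v_j$ impose no conditions, and verifying the converse implication.
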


\begin{proof}
 The multiplicativity condition from equation (\ref{multiplicativity}) gives $(y,z,0)\circ(y,z,0) = (yt^2 + zt)^2 = z^2t^2 = c_1xt^2 + c_2(yt^2 + zt)$. So we get $c_1 = \frac{z^2}{x}$ and $c_2=0$. Since $c_1 \in \Z_p$ we have $\lvert\frac{z^2}{x}\rvert \leq 1$.
\end{proof}

\begin{prop} \label{p-adic integral 3}. We have 
\begin{multline} 
\zeta_{\Z[t]/(t^3),p}(s_1,s_2) = {(1 - p^{-1})}^{-2} \int_{|z|^2 \leq |x| \leq 1} {|x|}^{s_1-2} {|z|}^{s_1-1} \\ {\text{max} \{|x|,|y|,|z|\}}^{s_2-s_1} dx dy dz. 
\end{multline}
\end{prop}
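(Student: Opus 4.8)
The plan is to specialize the general $p$-adic integral formula~(\ref{p-adic integral}) to the case $n = 3$ and the algebra $R = \Z[t]/(t^3)$, using the explicit description of $\mathcal{M}_p(\beta)$ from the lemma just proved. First I would write out the integrand of~(\ref{p-adic integral}) for $n=3$: the diagonal entries are $x_{11} = x$, $x_{22} = z$, $x_{33} = 1$, so $|x_{11}|^{s_1 - n + 1} = |x|^{s_1 - 2}$, $|x_{22}|^{s_1 - n + 2} = |z|^{s_1 - 1}$, and the factor $|x_{n-1,n-1}|^{s_1 - 1} = |x_{22}|^{s_1-1}$ — here I need to be careful about how the general formula's exponents collapse when $n=3$, since the product $|x_{11}|^{s_1-n+1}\cdots|x_{n-1,n-1}|^{s_1-1}$ has $n-1 = 2$ factors, giving exactly $|x|^{s_1-2}|z|^{s_1-1}$. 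The prefactor is $(1-p^{-1})^{1-n} = (1-p^{-1})^{-2}$, matching the claim.

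Next I would handle the minor-gcd factors. For $n=3$ there are two of them in~(\ref{p-adic integral}): $|g_{n-1}|^{s_2 - s_1} = |g_2|^{s_2-s_1}$ and $|g_{n-2}|^{s_3 - s_2} = |g_1|^{s_3 - s_2}$. Since we are computing $\zeta_{R,p}(s_1,s_2)$ with only two variables (because $\alpha_3 \equiv 1$ for a subring), the $s_3$ term should drop out; concretely, $g_1$ is the gcd of the $1\times 1$ minors, i.e.\ the gcd of all entries of $M$, and because the row $(0,0,1)$ is always present, $g_1 = 1$, so $|g_1|^{s_3-s_2} = 1$ regardless of $s_3$. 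That leaves only $|g_2|^{s_2 - s_1}$, where $g_2$ is the gcd of the $2\times 2$ minors of $M$. I would then compute these $2\times 2$ minors directly from the matrix $\begin{bmatrix} x & 0 & 0 \\ y & z & 0 \\ 0 & 0 & 1\end{bmatrix}$: the nonzero ones are $xz$ (rows $1,2$, columns $1,2$), $x$ (rows $1,3$, columns $1,3$), $z$ (rows $2,3$, columns $2,3$), and $-y$ (rows $2,3$, columns $1,3$). Hence $g_2 = \gcd(xz, x, z, y) = \gcd(x,y,z)$, so $|g_2| = \max\{|x|,|y|,|z|\}$ by the ultrametric property of the $p$-adic absolute value. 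Substituting $|g_2|^{s_2-s_1} = \max\{|x|,|y|,|z|\}^{s_2-s_1}$ gives the stated integrand.

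Finally I would address the measure. The general formula integrates $dM$, the normalized Haar measure on $T_3(\Z_p)$ (lower triangular $3\times 3$ matrices), but the matrices in $\mathcal{M}_p(\beta)$ have three free entries $x, y, z$ with the remaining strictly-lower entries forced to be $0$ and the $(3,3)$ entry forced to be $1$. So the integral over $\mathcal{M}_p(\beta)$ against $dM$ reduces to an integral over $(x,y,z) \in \Z_p^3$ against $dx\,dy\,dz$ (normalized Haar measure on $\Z_p^3$), subject to the constraint $|z|^2 \le |x|$ from the lemma, together with the automatic constraints $|x|,|z| \le 1$. Collecting everything yields exactly the claimed formula. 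The main obstacle I anticipate is purely bookkeeping: matching the indices and exponents of the general $n$-variable formula~(\ref{p-adic integral}) against the $n=3$, two-variable situation, and in particular making rigorous that the $g_1$ and higher-$s$ contributions are trivial because of the fixed last row — once that reduction is justified, the minor computation and the measure reduction are routine.
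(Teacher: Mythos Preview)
Your proposal is correct and follows exactly the approach the paper takes: it simply says the result ``follows directly from applying~(\ref{p-adic integral}) to $\Z[t]/(t^3)$,'' and your write-up spells out precisely that specialization (prefactor, diagonal exponents, the $2\times 2$ minor computation giving $|g_2|=\max\{|x|,|y|,|z|\}$, and the collapse of the $g_1$ factor because of the fixed last row $(0,0,1)$). The only trivial slip is a sign on one minor ($y$ rather than $-y$), which is irrelevant for the gcd.
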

\begin{proof}
This follows directly from applying (\ref{p-adic integral}) to $\Z[t]/(t^3)$.
\end{proof}

\

The remainder of this section is devoted to the explicit computation of this integral.  We use the following two formulae extensively in the computation.  
For $ Re(s) > -1$, let $I(s) = \int_{\Z_p} {|z|}^sdz$. Then it is very easy to see that 

$$I(s) = \frac{1-p^{-1}}{1-p^{-s-1}}.$$
For $\alpha \in \Z_p$ and $Re(s) > -1$ define
\begin{equation*}
I_0(s,\alpha) = \int_{|\alpha| < |z| \leq 1} {|z|}^sdz 
\end{equation*}
and 
\begin{equation*}
I_1(s, \alpha) = \int_{|\alpha| \leq |z| \leq 1} {|z|}^sdz . 
\end{equation*}
Then 
$$
I_0(s, \alpha) = I(s)(1 - {|\alpha|}^{s+1})
$$
and 
$$
 I_1(s, \alpha) = I(s)(1 - p^{-s-1}{|\alpha|}^{s+1}). 
$$

\

We now compute the $p$-adic integral in Proposition \ref{p-adic integral 3}. We divide our computation into six cases depending on the valuation of $x$, $y$ and $z$ and then sum up their contributions at the end. Note that in the final expressions for the $J_i, x= p^{-s_1}$ and $ y = p^{-s_2}$.
\\\\
$\textbf{CASE 1: } |z| \leq |y| \leq |x|$ 

\

Here

\[
J_1= {(1 - p^{-1})}^{-2} \int_{|z| \leq |y| \leq |x| \leq 1} {|x|}^{s_2-2} {|z|}^{s_1-1}dx dy dz. 
\]
Change of variables, $z$ to $zx$ and $y$ to $yx$ gives 
\[
=  \frac{1}{(1-p^{-1})^2} \int_{|z| \leq |y| \leq 1} |x|^{s_1+s_2-1}|z|^{s_1-1}dx dy dz
\]
Integrating with respect to $x$
\[
= \frac{1}{(1-p^{-1})(1-p^{-s_1-s_2})}\int_{|z| \leq |y| \leq 1} |z|^{s_1-1}dy dz
\]
Integrating with respect to $y$
\[
= \frac{1}{(1-p^{-1})(1-p^{-s_1-s_2})}\int_{|z| \leq 1} (1-p^{-1}|z|)|z|^{s_1-1}dz
\]
Integrating with respect to $z$
\[
=\frac{1}{(1-xy)}\bigg(\frac{1}{1-x} - \frac{p^{-1}}{(1-p^{-1}x)}\bigg). 
\]
As a result, 
\begin{equation}
J_1=\frac{1-p^{-1}}{(1-x)(1-p^{-1}x)(1-xy)}
\end{equation}
\\
$\textbf{CASE 2: } |y| < |z| \leq |x|$ 

\

Then 
\[
J_2 = {(1 - p^{-1})}^{-2} \int_{|y| < |z| \leq |x| \leq 1 } {|x|}^{s_2-2} {|z|}^{s_1-1}dx dy dz
\]
Change of variables, $z$ to $zx$ and $y$ to $yx$ gives 
\[
= {(1 - p^{-1})}^{-2} \int_{|y| < |z| \leq 1 }  {|x|}^{s_1+s_2-1}{|z|}^{s_1-1}dx dy dz
\]
Integrating with respect to $x$
\[
\frac{(1 - p^{-1})^{-1}}{1-p^{-s_1-s_2}}  \int_{|y| < |z| \leq 1 } {|z|}^{s_1-1}dy dz
\]
Integrating with respect to $z$
\[
\frac{1}{(1-p^{-s_1-s_2})(1-p^{-s_1})} \int_{|y| < 1} (1- |y|^{s_1}) dy
\]
Change of variables $y$ to $py$
\[
=\frac{p^{-1}}{(1-p^{-s_1-s_2})(1-p^{-s_1})} \int_{\Z_p} (1- |py|^{s_1}) dy
\]
\[
= \frac{p^{-1}}{(1-xy)(1-x)}\bigg(1 - \frac{x(1-p^{-1})}{1-p^{-1}x}\bigg). 
\]
So
\begin{equation}
J_2= \frac{p^{-1}}{(1-xy)(1-p^{-1}x)}. 
\end{equation}

\

$\textbf{CASE 3: } |x| \leq |z| < |y|$ 

\

Then 
\[
J_3 = \Z_p(s_1,s_2) = {(1 - p^{-1})}^{-2} \int_{|z|^2 \leq |x| \leq |z| < |y| \leq 1 } {|x|}^{s_1-2} {|z|}^{s_1-1}|y|^{s_2-s_1}dx dy dz
\]
Integrate with respect to $x$
\[
= \frac{1}{(1-p^{-1})(1-p^{-s_1+1})}\int_{|z| < |y| \leq 1}(|z|^{s_1-1} - p^{-s_1+1}|z|^{2s_1-2}) |z|^{s_1-1} |y|^{s_2-s_1} dy dz
\]
Integrating with respect to $y$
\[
\frac{1}{(1-p^{s_1-s_2-1})(1-p^{-s_1+1})}\int_{|z|< 1}(|z|^{2s_1-2} - p^{-s_1+1}|z|^{3s_1-3})(1-|z|^{s_2-s_1+1}) dz
\]
Change of variables $z$ to $pz$
\begin{multline*}
=\frac{p^{-1}}{(1-p^{s_1-s_2-1})(1-p^{-s_1+1})}\bigg(\int_{|z| \leq 1} p^{-2s_1+2}|z|^{2s_1-2} - p^{-4s_1+4}|z|^{3s_1-3} \\ - p^{-s_1-s_2+1}|z|^{s_1+s_2-1}+p^{-3s_1-s_2+3}|z|^{2s_1+s_2-2} dz\bigg)
\end{multline*}
Integrating with respect to $z$
\[
=\frac{p^{-1}x(1-p^{-1})}{(1-px)(x-p^{-1}y)}\bigg(\frac{p^2x^2}{1-px^2} - \frac{p^4x^4}{1-p^2x^3} - \frac{pxy}{1-xy} + \frac{p^3x^3y}{1-px^2y}\bigg). 
\]
\begin{equation}
J_3= \frac{(1-p)x^2(p^2x^3+px^2y-px-1)}{(1-px^2)(1-xy)(1-p^2x^3)(1-px^2y)}. 
\end{equation}

\

$\textbf{CASE 4: } |z| < |x| < |y|$ 

\

Then 
\[
J_4 = {(1 - p^{-1})}^{-2} \int_{|z| < |x| < |y| \leq 1 } {|x|}^{s_1-2} {|z|}^{s_1-1}|y|^{s_2-s_1}dx dy dz
\]
Integrating with respect to $y$
\[
= \frac{(1-p^{-1})^{-1}}{1-p^{s_1-s_2-1}} \int_{|z| < |x| < 1 } {|x|}^{s_1-2} {|z|}^{s_1-1}(1-|x|^{s_2-s_1+1})dx dz
\]
Change of variables $x$ to $px$ and $z$ to $pz$
\[
= \frac{p^{-2s_1+1}(1-p^{-1})^{-1}}{1-p^{s_1-s_2-1}} \int_{|z| < |x| \leq 1 } {|x|}^{s_1-2} {|z|}^{s_1-1}(1-p^{s_1-s_2-1}|x|^{s_2-s_1+1})dx dz
\]
Integrating with respect to $x$
\begin{multline*}
=\frac{p^{-2s_1+1}}{(1-p^{s_1-s_2-1})(1-p^{-s_1+1})} \int_{|z| < 1 } |z|^{s_1-1}(1-|z|^{s_1-1})  \\ -\frac{p^{-s_1-s_2}}{(1-p^{s_1-s_2-1})(1-p^{-s_2})} \int_{|z| < 1 } |z|^{s_1-1}(1-|z|^{s_2}) dz
\end{multline*}
Change of variable $z$ to $pz$
\begin{multline*}
\frac{p^{-2s_1}}{(1-p^{s_1-s_2-1})(1-p^{-s_1+1})} \int_{|z| \leq 1 } p^{-s_1+1} |z|^{s_1-1} - p^{-2s_1+2} |z|^{2s_1-2} dz \\ - \bigg(\frac{p^{-s_1-s_2-1}}{1-p^{s_1-s_2-1})(1-p^{-s_2})} \int_{|z| \leq 1 }  p^{-s_1+1} |z|^{s_1-1} - p^{-s_1-s_2+1}|z|^{s_1+s_2-1} dz \bigg)
\end{multline*}
Integrating with respect to $z$
\[
=\frac{x^3(1-p^{-1})}{(x-p^{-1}y)(1-px)}\bigg(\frac{px}{1-x} - \frac{p^2x^2}{1-px^2}\bigg) - \frac{p^{-1}(1-p^{-1})x^2y}{(x-p^{-1}y)(1-y)}\bigg(\frac{px}{1-x}-\frac{pxy}{1-xy}\bigg)
\]
\[
\frac{(p-1)x^4}{(x-p^{-1}y)(1-x)(1-px^2)} - \frac{(1-p^{-1})x^3y}{(x-p^{-1}y)(1-x)(1-xy)}. 
\]
This gives 
\begin{equation}
J_4=\frac{(p-1)x^3}{(1-x)(1-px^2)(1-xy))}. 
\end{equation}

\

$\textbf{CASE 5: } |x| < |y| \leq |z|$ 

\

Then 
\[
J_5 = {(1 - p^{-1})}^{-2} \int_{|z|^2 \leq |x| < |y| \leq |z| < 1 } {|x|}^{s_1-2} {|z|}^{s_2-1}dx dy dz
\]
Integrating with respect to $y$
\[
= {(1 - p^{-1})}^{-2} \int_{|z|^2 \leq |x|  <|z| < 1 } {|x|}^{s_1-2} {|z|}^{s_2-1}(|z|-|x|)dxdz
\]
\[
=  {(1 - p^{-1})}^{-2} \int_{|z|^2 \leq |x| < |z| < 1 } {|x|}^{s_1-2} {|z|}^{s_2} - |x|^{s_1-1}|z|^{s_2-1} dx dz
\]
\[
=I_1 - I_2
\]
with 
\[
I_1 = {(1 - p^{-1})}^{-2} \int_{|z|^2 \leq |x| < |z| < 1 } {|x|}^{s_1-2} {|z|}^{s_2}dx dz
\]
and 
\[
I_2 = {(1 - p^{-1})}^{-2} \int_{|z|^2 \leq |x| < |z| < 1 }  |x|^{s_1-1}|z|^{s_2-1} dx dz. 
\]
We first compute $I_1$. Integrating with respect to $x$
\[
I_1=\frac{p^{-s_1+1}}{(1-p^{-1})(1-p^{-s_1+1})}\int_{|z|<1} (|z|^{s_1-1}-|z|^{2s_1-2})|z|^{s_2} dz 
\]
Change of variable $z$ to $pz$
\[
I_1 = \frac{p^{-s_1+s_2}}{(1-p^{-1})(1-p^{-s_1+1})}\int_{|z| \leq1} p^{-s_1+1}|z|^{s_1+s_2-1}- p^{-2s_1+2}|z|^{2s_1+s_2-2}dz
\]
Integrating with respect to $z$
\[
= \frac{p^{-s_1+s_2}}{1-p^{-s_1+1}}\bigg(\frac{p^{-s_1+1}}{1-p^{-s_1-s_2}} - \frac{p^{-2s_1+2}}{1-p^{-2s_1-s_2+1}}\bigg)
\]
\[
I_1 = \frac{px^2y}{(1-xy)(1-px^2y)}.
\]
Next we compute $I_2$.  Integrating with respect to $x$
\[
I_2 = \frac{p^{-s_1}}{(1 - p^{-1})(1-p^{-s_1})} \int_{|z| < 1 } (|z|^{s_1}-|z|^{2s_1})|z|^{s_2-1}dz
\]
Change of variable $z$  to $pz$
\[
I_2 =  \frac{p^{-s_1-s_2}}{(1 - p^{-1})(1-p^{-s_1})} \int_{|z| \leq 1 } (p^{-s_1}|z|^{s_1}-p^{-2s_1}|z|^{2s_1})|z|^{s_2-1}dz
\]
Integrating with respect to $z$
\[
I_2 = \frac{p^{-s_1-s_2}}{1 - p^{-s_1}}\bigg(\frac{p^{-s_1}}{1-p^{-s_1-s_2}} - \frac{p^{-2s_1}}{1-p^{-2s_1-s_2}}\bigg)
\]
\[
I_2 = \frac{x^2y}{(1-xy)(1-x^2y)}
\]
Putting $I_1$ and $I_2$ together 
\begin{equation}
J_5 = I_1 - I_2 =  \frac{(p-1)x^2y}{(1-xy)(1-x^2y)(1-px^2y)}. 
\end{equation}
\\
$\textbf{CASE 6: } |y| \leq |x| < |z|$ 

\

$\textbf{SUBCASE 1: } |z|^2 \leq |y|$
\[
{J_{6}}^1 = {(1 - p^{-1})}^{-2} \int_{|z|^2 \leq |y| \leq |x| < |z| \leq 1 } {|x|}^{s_1-2} {|z|}^{s_2-1}dx dy dz
\]
Change of variables $x$ to $xz$ and $y$ to $yz$
\[
= (1 - p^{-1})^{-2} \int_{|z| \leq |y| \leq |x| <  1 } {|x|}^{s_1-2} {|z|}^{s_1+s_2-1}dx dy dz
\]
Change of variables $x$ to $px$, $y$ to $py$ and $z$ to $pz$
\[
p^{-2s_1-s_2}(1 - p^{-1})^{-2}\int_{|z| \leq |y| \leq |x| \leq  1 }{|x|}^{s_1-2} {|z|}^{s_1+s_2-1}dx dy dz
\]
Integrating with respect to $x$
\[
\frac{p^{-2s_1-s_2}}{(1 - p^{-1})(1-p^{-s_1+1})}\int_{|z| \leq |y| \leq 1}(1-p^{-s_1+1}|y|^{s_1-1})|z|^{s_1+s_2-1} dy dz
\]
Integrating with respect to $y$
\begin{multline*}
=\frac{p^{-2s_1-s_2}}{(1-p^{-1})(1-p^{-s_1+1})}\int_{|z| \leq 1}(1-p^{-1}|z|)|z|^{s_1+s_2-1} dz \\ - \bigg(\frac{p^{-4s_1-s_2+1}}{(1-p^{-s_1+1})(1-p^{-s_1})}\int_{|z| \leq 1}(|z|^{s_1+s_2-1} - p^{-s_1}|z|^{2s_1+s_2-1} dz \bigg)
\end{multline*}
Integrating with respect to $z$
\[
= \frac{x^2y}{1-px}\bigg(\frac{1}{1-xy} - \frac{p^{-1}}{1-p^{-1}xy}\bigg) - \frac{px^3y(1-p^{-1})}{(1-px)(1-x)}\bigg(\frac{1}{1-xy}-\frac{x}{1-x^2y}\bigg)
\]
\[
=\frac{x^2y(1-p^{-1})}{(1-px)(1-p^{-1}xy)(1-xy)} - \frac{px^3y(1-p^{-1})}{(1-px)(1-xy)(1-x^2y)}.
\]
Hence, 
\[
{J_{6}}^1=\frac{x^2y(1-p^{-1})}{(1-xy)(1-x^2y)(1-p^{-1}xy)}
\]

\

$\textbf{SUBCASE 2 } |y| < |z|^2$

\[
{J_{6}}^2 = {(1 - p^{-1})}^{-2} \int_{|y| < |z|^2 \leq |x| < |z| \leq 1 } {|x|}^{s_1-2} {|z|}^{s_2-1}dx dy dz
\]
Change of variables $x$ to $xz$ and $y$ to $yz$
\[
= (1 - p^{-1})^{-2} \int_{|y| < |z| \leq |x| < 1 } {|x|}^{s_1-2} {|z|}^{s_1+s_2-1}dx dy dz
\]
Change of variables from $x$ to $px$, $y$ to $py$ and $z$ to $pz$
\[
p^{-2s_1-s_2}(1 - p^{-1})^{-2} \int_{|y| < |z| \leq |x| \leq 1 } {|x|}^{s_1-2} {|z|}^{s_1+s_2-1}dx dy dz
\]
Integrating with respect to $x$
\[
\frac{p^{-2s_1-s_2}}{(1-p^{-1})(1-p^{-s_1+1})}\int_{|y| < |z| \leq 1}|z|^{s_1+s_2-1}-p^{-s_1+1}|z|^{2s_1+s_2-2} dz
\]
Integrating with respect to $z$
\begin{multline*}
=\frac{p^{-2s_1-s_2}}{(1-p^{-s_1-s_2})(1-p^{-s_1+1})}\int_{|y| < 1} (1-|y|^{s_1+s_2}) dy \\ - \bigg(\frac{p^{-3s_1-s_2+1}}{(1-p^{-s_1+1})(1-p^{-2s_1-s_2+1})}\int_{|y| < 1} (1 - |y|^{2s_1+s_2-1})dy\bigg)
\end{multline*}
Change of variable $y$ to $py$
\begin{multline*}
=\frac{p^{-2s_1-s_2-1}}{(1-p^{-s_1-s_2})(1-p^{-s_1+1})}\int_{|y| \leq 1} (1-p^{-s_1-s_2}|y|^{s_1+s_2}) dy \\ - \bigg(\frac{p^{-3s_1-s_2}}{(1-p^{-s_1+1})(1-p^{-2s_1-s_2+1})}\int_{|y| \leq1} (1 - p^{-2s_1-s_2+1}|y|^{2s_1+s_2-1})dy\bigg)
\end{multline*}
\[
= \frac{p^{-1}x^2y}{(1-xy)(1-px)}\bigg(1 - \frac{xy(1-p^{-1})}{1-p^{-1}xy}\bigg) - \frac{x^3y}{(1-px)(1-px^2y)}\bigg(1- \frac{px^2y(1-p^{-1})}{1-x^2y}\bigg). 
\]
Summing up, 
\[
{J_{6}}^2=\frac{p^{-1}x^2y}{(1-p^{-1}xy)(1-x^2y)}. 
\]
Adding the contribution from the two subcases we get  
\begin{equation}
J_6 = {J_{6}}^1 + {J_{6}}^2 = \frac{x^2y}{(1-x^2y)(1-xy)}. 
\end{equation}

We can now prove Theorem \ref{main-thm}. 

\begin{proof}[Proof of Theorem \ref{main-thm}]
We have 
$$
\zeta_{\Z[t]/(t^3,p}(s_1,s_2) = \sum_{i=1}^{6} J_i. 
$$
The proof now follows from an elementary computation. 
\end{proof}

\section{Corollaries} \label{theorems}
In this section we draw some corollaries from the Theorem \ref{main-thm}.  Our first application is to compute the subring zeta function of $R = \Z[t]/(t^3)$, $\zeta_{\Z[t]/(t^3)}(s)$. 
\begin{cor}
We have 
\begin{equation}
 \zeta_{\Z[t]/(t^3)}(s) =  \frac{\zeta(s)\zeta(2s-1)\zeta(3s-2)}{\zeta(4s-2)}. 
\end{equation}
\end{cor}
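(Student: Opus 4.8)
The plan is to deduce the single-variable subring zeta function $\zeta_{\Z[t]/(t^3)}(s)$ from Theorem~\ref{main-thm} by specializing the two-variable local factor. Since $|R:S| = \alpha_1(S)\alpha_2(S)$ for a finite-index subring $S$ (recall $\alpha_3 = 1$), we have $\zeta_{\Z[t]/(t^3)}(s) = \zeta_{\Z[t]/(t^3)}(s,s)$ in the notation of the two-variable function, and correspondingly at each prime $\zeta_{\Z[t]/(t^3),p}(s) = \zeta_{\Z[t]/(t^3),p}(s,s)$. So the first step is simply to set $s_1 = s_2 = s$ in Theorem~\ref{main-thm}, which amounts to setting $X = Y = p^{-s}$ in the rational function. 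Writing $T = p^{-s}$, the denominator $(1-XY)(1-pX^2Y)(1-p^2X^3)$ becomes $(1-T^2)(1-pT^3)(1-p^2T^3)$ and the numerator $1 + X + pX^2 - pX^3Y - p^2X^4Y - p^2X^5Y$ becomes $1 + T + pT^2 - pT^4 - p^2T^5 - p^2T^6$.

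Next I would simplify this rational function in $T$. The natural guess, given the target $\zeta(s)\zeta(2s-1)\zeta(3s-2)/\zeta(4s-2)$, is that the local factor equals $\frac{1}{(1-T)(1-pT^2)(1-p^2T^3)}\cdot(1 - p^2T^4)$, since locally $\zeta(s) \leftrightarrow (1-T)^{-1}$, $\zeta(2s-1)\leftrightarrow (1-pT^2)^{-1}$, $\zeta(3s-2)\leftrightarrow(1-p^2T^3)^{-1}$, and $\zeta(4s-2)\leftrightarrow(1-p^2T^4)^{-1}$. So the key algebraic identity to verify is
\[
\frac{1 + T + pT^2 - pT^4 - p^2T^5 - p^2T^6}{(1-T^2)(1-pT^3)(1-p^2T^3)} = \frac{1 - p^2T^4}{(1-T)(1-pT^2)(1-p^2T^3)}.
\]
Clearing denominators, this reduces to checking $(1 + T + pT^2 - pT^4 - p^2T^5 - p^2T^6)(1-pT^2) = (1-p^2T^4)(1-T^2)(1-pT^3)$, i.e., that both sides expand to the same polynomial in $T$ (with coefficients polynomial in $p$). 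I would carry this out by direct expansion: the left side is a degree-$8$ polynomial, the right side likewise, and one checks the eight-or-so coefficients match. Factoring out $(1-T)$ from $(1-T^2)$ and noticing $1+T+pT^2-pT^4-p^2T^5-p^2T^6 = (1+T)(1 + pT^2) - p T^3(\cdots)$-type groupings may streamline this, but brute expansion suffices.

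Finally, having established $\zeta_{\Z[t]/(t^3),p}(s) = \frac{1-p^2T^4}{(1-T)(1-pT^2)(1-p^2T^3)}$ with $T = p^{-s}$, I would take the Euler product over all primes $p$ and recognize the four factors: $\prod_p (1-p^{-s})^{-1} = \zeta(s)$, $\prod_p(1-p\cdot p^{-2s})^{-1} = \prod_p(1-p^{1-2s})^{-1} = \zeta(2s-1)$, $\prod_p(1-p^2\cdot p^{-3s})^{-1} = \zeta(3s-2)$, and $\prod_p(1-p^2\cdot p^{-4s}) = \prod_p(1-p^{2-4s}) = \zeta(4s-2)^{-1}$. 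Multiplying these gives exactly $\zeta(s)\zeta(2s-1)\zeta(3s-2)/\zeta(4s-2)$, completing the proof. The only real obstacle is the polynomial identity in the second step, and that is purely mechanical; the rest is bookkeeping with the translation between Euler factors and zeta values.
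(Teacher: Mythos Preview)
Your approach is essentially identical to the paper's: specialize $s_1=s_2=s$ in Theorem~\ref{main-thm}, simplify the local factor, and take the Euler product. The paper writes the simplified local factor as $\dfrac{1+pZ^2}{(1-Z)(1-p^2Z^3)}$, which is the same as your $\dfrac{1-p^2T^4}{(1-T)(1-pT^2)(1-p^2T^3)}$ after multiplying top and bottom by $(1-pT^2)$.

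One small slip: when you ``clear denominators'' you dropped a factor of $(1-T)$ on the left. The correct identity to check is
\[
(1 + T + pT^2 - pT^4 - p^2T^5 - p^2T^6)(1-T)(1-pT^2) \;=\; (1-p^2T^4)(1-T^2)(1-pT^3),
\]
or equivalently, after cancelling $(1-T)$ from both sides (using $1-T^2=(1-T)(1+T)$),
\[
(1 + T + pT^2 - pT^4 - p^2T^5 - p^2T^6)(1-pT^2) \;=\; (1-p^2T^4)(1+T)(1-pT^3).
\]
As written, your two sides do not agree (the left is degree $8$, the right degree $9$). With this correction the expansion goes through and the rest of your argument is fine.
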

\begin{proof} We do this locally.  Setting  $s_2 = s_1 = s$ in Theorem \ref{main-thm}, with $Z = p^{-s}$, we get  
\[
\zeta_{\Z[t]/(t^3),p}(s) =  \frac{      1+ Z +pZ^2- pZ^4-p^2Z^5-p^2Z^6}{(1-Z^2)(1-pZ^3)(1-p^2Z^3)} 
\]
\begin{equation*}
= \frac{1+pZ^2}{(1-Z)(1-p^2Z^3)}. 
\end{equation*}
Next, 
\begin{equation}
 \zeta_{\Z[t]/(t^3)}(s) = \prod_p \zeta_{\Z[t]/(t^3),p}(s) = \frac{\zeta(s)\zeta(2s-1)\zeta(3s-2)}{\zeta(4s-2)}
\end{equation}
\end{proof}
An application of a Tauberian theorem gives the following corollary: 
\begin{cor}
Let $a_n(\Z[t]/(t^3))$ be the number of subrings with an identity of $\Z[t]/(t^3)$ of index $n$. Then 
\[
\sum_{n \leq B} a_n(\Z[t]/(t^3)) \sim \frac{1}{12\zeta(2)} B (\ln{B})^2, \;  \;   B \to \infty
\]

\end{cor}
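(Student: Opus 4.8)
The plan is to establish the Tauberian estimate from the known form of the Dirichlet series $\zeta_{\Z[t]/(t^3)}(s) = \zeta(s)\zeta(2s-1)\zeta(3s-2)/\zeta(4s-2)$, which we may take as given by the preceding corollary. First I would locate the rightmost pole of this function and determine its order. The factor $\zeta(s)$ has a simple pole at $s=1$; the factor $\zeta(2s-1)$ has a simple pole at $s=1$ as well (since $2s-1=1$ there); the factor $\zeta(3s-2)$ has a simple pole at $s=1$ (since $3s-2=1$ there); and $1/\zeta(4s-2)$ is holomorphic and nonzero at $s=1$ because $4s-2=2$ and $\zeta(2)\neq 0$. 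Hence $s=1$ is a pole of order exactly $3$, and all other singularities lie in the half-plane $\mathrm{Re}(s)\le$ some value strictly less than $1$ (using the classical zero-free region and nonvanishing of $\zeta$ on $\mathrm{Re}(s)=1$ to handle $1/\zeta(4s-2)$ for $\mathrm{Re}(s)$ near $1/2$, but more simply just noting the other zeta factors have their next poles further left).

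Next I would invoke a standard Tauberian theorem of Delange--Ikehara--Landau type: if $\sum a_n n^{-s}$ has nonnegative coefficients, converges for $\mathrm{Re}(s)>1$, and extends to a function that near $s=1$ looks like $g(s)/(s-1)^{w}$ with $g$ holomorphic and nonzero at $1$ and $w$ a positive integer (plus suitable growth control on a line $\mathrm{Re}(s)=1$), then $\sum_{n\le B}a_n \sim \frac{g(1)}{(w-1)!}\,B(\ln B)^{w-1}$. Here $w=3$, so the power of $\ln B$ is $(\ln B)^2$ and the constant is $g(1)/2!=g(1)/2$. I would then compute $g(1)=\lim_{s\to 1}(s-1)^3\zeta_{\Z[t]/(t^3)}(s)$. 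Since $\zeta(s)\sim 1/(s-1)$, $\zeta(2s-1)\sim 1/(2(s-1))$, and $\zeta(3s-2)\sim 1/(3(s-1))$ as $s\to 1$, the product of leading terms gives $1/(6(s-1)^3)$, so $g(1)=\frac{1}{6}\cdot\frac{1}{\zeta(2)}$. Therefore the constant in the asymptotic is $\frac{1}{2}\cdot\frac{1}{6\zeta(2)}=\frac{1}{12\zeta(2)}$, matching the statement.

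The remaining point is to verify the analytic hypotheses needed to apply the Tauberian theorem, namely that $\zeta_{\Z[t]/(t^3)}(s)$ continues to a neighborhood of the closed half-plane $\mathrm{Re}(s)\ge 1$ except for the pole at $s=1$, with at most polynomial growth in $|\mathrm{Im}(s)|$ on vertical strips. This follows from the same property for $\zeta(s)$, $\zeta(2s-1)$, $\zeta(3s-2)$ and from the fact that $1/\zeta(4s-2)$ is holomorphic and polynomially bounded on $\mathrm{Re}(s)\ge 1-\delta$ for small $\delta>0$, which is a consequence of the classical zero-free region for $\zeta$ together with standard bounds on $1/\zeta$ near the line $\mathrm{Re}=1$. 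I expect this verification to be the main (though routine) obstacle: one must cite the appropriate version of the Tauberian theorem that accommodates a pole of order greater than one and the mild analytic continuation available for these Euler products, rather than, say, the bare Wiener--Ikehara theorem which only yields $\sum_{n\le B}a_n\sim c B$ for a simple pole. With the correct Tauberian input in hand, the computation of the leading constant is the elementary Laurent-expansion calculation above, and the proof is complete.
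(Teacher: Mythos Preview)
Your proposal is correct and follows exactly the approach the paper has in mind: the paper's own ``proof'' is simply the sentence ``An application of a Tauberian theorem gives the following corollary,'' with no further detail. Your computation of the leading Laurent coefficient $g(1)=\tfrac{1}{6\zeta(2)}$ and hence of the constant $\tfrac{1}{12\zeta(2)}$ is precisely the routine calculation that the paper leaves implicit, and your remarks about the analytic continuation and growth hypotheses needed for a Delange--type Tauberian theorem are the appropriate (and standard) justifications.
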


We now examine cocyclic subrings.  Let $a_n^{cyclic}(\Z[t]/(t^3))$ be the number of subrings of $\Z[t]/(t^3)$ which are cocyclic and of index $n$ in $\Z[t]/(t^3)$. Also let 
$$
\zeta_{\Z[t]/(t^3),p}^{\text{cyclic}}(s) = \sum_{n=1}^\infty \frac{a_n^{cyclic}(\Z[t]/(t^3))}{n^s}. 
$$

\begin{cor} The function $\zeta_{\Z[t]/(t^3),p}^{\text{cyclic}}(s)$ is equal to 
\begin{equation}
\zeta(s)\zeta(2s-1)\zeta(3s-2) \prod_p(
1  - p^{-2s} - p^{1-3s} + p^{1-4s}- p^{2-4s} + p^{2-5s}). 
\end{equation}
\end{cor}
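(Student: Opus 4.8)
The plan is to extract the cocyclic part of the bivariate zeta function from Theorem~\ref{main-thm} by a limiting procedure in the variable $s_2$. A subring $S$ is cocyclic precisely when $\alpha_2(S) = 1$, i.e. when the corank is $0$ or $1$. In the Dirichlet series $\sum \alpha_1(S)^{-s_1}\alpha_2(S)^{-s_2}$, the terms with $\alpha_2(S) = 1$ are exactly those that survive when we let $s_2 \to +\infty$ (equivalently $Y = p^{-s_2} \to 0$), since every term with $\alpha_2(S) > 1$ carries a positive power of $Y$ and hence vanishes in the limit, while the cocyclic terms contribute $\alpha_1(S)^{-s_1}$ with no $Y$-dependence. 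So first I would argue that
\[
\zeta_{\Z[t]/(t^3),p}^{\text{cyclic}}(s) = \lim_{Y \to 0} \zeta_{\Z[t]/(t^3),p}(s_1,s_2)\Big|_{X = p^{-s}},
\]
with the limit taken termwise; convergence is not an issue for $\Re(s)$ large since the whole series converges absolutely there.

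Next I would carry out the limit $Y \to 0$ on the closed-form expression. In the numerator $1 + X + pX^2 - pX^3Y - p^2X^4Y - p^2X^5Y$, setting $Y = 0$ leaves $1 + X + pX^2$. In the denominator $(1-XY)(1-pX^2Y)(1-p^2X^3)$, setting $Y = 0$ leaves $1 - p^2X^3$. Hence
\[
\zeta_{\Z[t]/(t^3),p}^{\text{cyclic}}(s) = \frac{1 + X + pX^2}{1 - p^2X^3}, \qquad X = p^{-s}.
\]
This is the same rational function one gets from the corank-$\le 1$ part; alternatively one could cross-check it directly against Brakenhoff's count of $\binom{3}{2} = 3$ cocyclic subrings of each prime-power index, which forces the coefficient of $X^e$ for $e \ge 1$ in $\sum_e a_{p^e}^{\text{cyclic}}X^e/(\cdots)$ to match, but the limit argument is cleaner.

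Then it remains to massage $\prod_p \frac{1+X+pX^2}{1-p^2X^3}$ into the claimed shape $\zeta(s)\zeta(2s-1)\zeta(3s-2)\prod_p(1 - p^{-2s} - p^{1-3s} + p^{1-4s} - p^{2-4s} + p^{2-5s})$. The strategy is to divide and multiply by the Euler factors of $\zeta(s)\zeta(2s-1)\zeta(3s-2)$, namely $\frac{1}{(1-X)(1-pX^2)(1-p^2X^3)}$. So I would write
\[
\frac{1+X+pX^2}{1-p^2X^3} = \frac{1}{(1-X)(1-pX^2)(1-p^2X^3)} \cdot \big[(1-X)(1-pX^2)(1+X+pX^2)\big],
\]
and then expand the correction factor $(1-X)(1-pX^2)(1+X+pX^2)$ as a polynomial in $X$. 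Expanding $(1-X)(1+X+pX^2) = 1 + (p-1)X^2 - pX^3$ wait — more carefully $(1-X)(1+X+pX^2) = 1 + X + pX^2 - X - X^2 - pX^3 = 1 + (p-1)X^2 - pX^3$; then multiplying by $(1-pX^2)$ gives $1 + (p-1)X^2 - pX^3 - pX^2 - p(p-1)X^4 + p^2X^5 = 1 - X^2 - pX^3 + (p - p^2)X^4 + p^2X^5$, i.e. $1 - X^2 - pX^3 + pX^4 - p^2X^4 + p^2X^5$. Substituting $X = p^{-s}$ turns this into $1 - p^{-2s} - p^{1-3s} + p^{1-4s} - p^{2-4s} + p^{2-5s}$, exactly the claimed local factor. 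Taking the product over all $p$ of the main part yields $\zeta(s)\zeta(2s-1)\zeta(3s-2)$, completing the proof.

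The main obstacle is not any of the algebra, which is entirely routine, but rather justifying the termwise limit $s_2 \to \infty$: one should note that for $\Re(s_1)$ (and hence $\Re(s)$) in the region of absolute convergence the bivariate series converges absolutely and uniformly for $\Re(s_2) \ge 0$, so the interchange of limit and summation is legitimate and the resulting identity of rational functions then holds by analytic continuation. I would state this carefully but briefly, since the rest is mechanical.
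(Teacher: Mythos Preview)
Your proof is correct and follows exactly the paper's approach: set $Y=0$ (equivalently $s_2\to\infty$) in Theorem~\ref{main-thm} to obtain the local factor $\dfrac{1+X+pX^2}{1-p^2X^3}$, then factor out the Euler factors of $\zeta(s)\zeta(2s-1)\zeta(3s-2)$ and expand the remaining polynomial. One small caveat: your aside about Brakenhoff's count of $\binom{3}{2}=3$ cocyclic subrings per prime-power index applies to $\Z^3$, not to $\Z[t]/(t^3)$ (indeed the expansion of $\tfrac{1+X+pX^2}{1-p^2X^3}$ gives $1,1,p,p^2,p^2,\dots$ rather than a constant $3$), so that cross-check should be dropped; it plays no role in your actual argument.
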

\begin{proof} Again the proof is local. To compute the $p$-local factor of the zeta function we set $Y = 0 $, i.e., $s_2 \to \infty$, in Theorem \ref{main-thm} to obtain 
$$
\frac{1 + p^{-s}+ p^{1-2s}}{1-p^{2-3s}}. 
$$
The result is now obvious. 
\end{proof}

Since $\zeta_{\Z[t]/(t^3),p}^{\text{cyclic}}(s)$ has a pole of order three at $s=1$, a Tauberian theorem gives the following corollary: 

\begin{cor} We have 
\[
\sum_{n \leq B} a_n^{cyclic}(\Z[t]/(t^3)) \sim  CB\ln{B}^2, \;  \;   B \to \infty
\]
where $C =  \frac{1}{12} \prod_p (1 -3p^{-2} +2p^{-3})$.
\end{cor}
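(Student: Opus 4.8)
The plan is to extract the asymptotics directly from the Dirichlet series identity of the preceding corollary. Write
\[
\zeta_{\Z[t]/(t^3)}^{\text{cyclic}}(s)\;=\;\zeta(s)\,\zeta(2s-1)\,\zeta(3s-2)\cdot G(s),\qquad
G(s):=\prod_p\Big(1-p^{-2s}-p^{1-3s}+p^{1-4s}-p^{2-4s}+p^{2-5s}\Big).
\]
The first step is to see that $G$ contributes no pole and no zero near the line $\mathrm{Re}(s)=1$. Its general Euler factor is $1+O\!\big(p^{1-3\mathrm{Re}(s)}\big)+O\!\big(p^{2-4\mathrm{Re}(s)}\big)$, so the product converges absolutely and defines a holomorphic function on $\mathrm{Re}(s)>\tfrac34$; since the factors tend to $1$ as $p\to\infty$ while each of the finitely many remaining factors is a nonvanishing holomorphic function at $s=1$ (a direct check: the factor at $p$ equals $1-3p^{-2}+2p^{-3}$ when $s=1$, which is positive for all $p$), $G$ is holomorphic and zero-free in a neighborhood of $\mathrm{Re}(s)=1$, with $G(1)=\prod_p(1-3p^{-2}+2p^{-3})$ a finite nonzero constant.

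Next I would locate the pole at $s=1$. For $\mathrm{Re}(s)>1$ one has $\mathrm{Re}(2s-1)>1$ and $\mathrm{Re}(3s-2)>1$, so the only singularities of the three zeta factors in a neighborhood of $\{\mathrm{Re}(s)\geq1\}$ are the simple poles of $\zeta(s)$, $\zeta(2s-1)$ and $\zeta(3s-2)$, all occurring at $s=1$. Hence $\zeta_{\Z[t]/(t^3)}^{\text{cyclic}}(s)$ extends meromorphically to such a neighborhood, holomorphic there except for a pole of order $3$ at $s=1$. Using $\zeta(w)\sim(w-1)^{-1}$ as $w\to1$ gives $(s-1)\zeta(s)\to1$, $(s-1)\zeta(2s-1)\to\tfrac12$ and $(s-1)\zeta(3s-2)\to\tfrac13$, so the leading Laurent coefficient is
\[
\rho:=\lim_{s\to1}(s-1)^3\,\zeta_{\Z[t]/(t^3)}^{\text{cyclic}}(s)=\tfrac16\,G(1)=\tfrac16\prod_p\big(1-3p^{-2}+2p^{-3}\big).
\]

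Finally I would invoke a Tauberian theorem of Ikehara--Delange type. The coefficients $a_n^{cyclic}(\Z[t]/(t^3))$ are nonnegative and are dominated by $a_n(\Z[t]/(t^3))$, whose Dirichlet series converges for $\mathrm{Re}(s)>1$ by the previous corollary; so $\zeta_{\Z[t]/(t^3)}^{\text{cyclic}}(s)$ has abscissa of convergence exactly $1$ (it has a pole there), and, together with the meromorphic continuation just established and the polynomial growth on vertical lines inherited from $\zeta$ and from the convergent product $G$, all the hypotheses of the Tauberian theorem are met with abscissa $\alpha=1$ and pole order $w=3$. It yields
\[
\sum_{n\leq B}a_n^{cyclic}(\Z[t]/(t^3))\;\sim\;\frac{\rho}{\alpha\,\Gamma(w)}\,B^{\alpha}(\ln B)^{w-1}\;=\;\frac{\rho}{2}\,B(\ln B)^2\;=\;\frac1{12}\prod_p\big(1-3p^{-2}+2p^{-3}\big)\,B(\ln B)^2,
\]
which is the assertion with $C=\tfrac1{12}\prod_p(1-3p^{-2}+2p^{-3})$. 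I do not anticipate a genuine analytic obstacle, since the continuation slightly past $\mathrm{Re}(s)=1$ and the vertical growth bounds are classical; the one point that needs care is tracking the three normalizing constants that enter the final answer — the $\tfrac12$ from $\zeta(2s-1)$, the $\tfrac13$ from $\zeta(3s-2)$, and the $\tfrac1{\Gamma(3)}=\tfrac12$ coming from the order of the pole — so that they combine correctly into the stated $\tfrac1{12}$.
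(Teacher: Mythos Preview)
Your proposal is correct and follows the same approach as the paper, which simply notes that $\zeta_{\Z[t]/(t^3)}^{\text{cyclic}}(s)$ has a pole of order $3$ at $s=1$ and invokes a Tauberian theorem without further detail. You supply exactly the verification the paper omits: absolute convergence and nonvanishing of the Euler product $G(s)$ for $\mathrm{Re}(s)>\tfrac34$, the computation $G(1)=\prod_p(1-3p^{-2}+2p^{-3})$, the residue $\rho=\tfrac16 G(1)$ coming from the three zeta factors, and the final combination $\rho/\Gamma(3)=\tfrac1{12}G(1)$.
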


So that the ratio of cyclic subrings to total subrings of $\Z[t]/(t^3)$ is given by
\[
\zeta(2) \prod_p (1 -3p^{-2} +2p^{-3}). 
\]

\end{document}